\documentclass[times, 10pt,twocolumn]{article}
\usepackage{latex8}
\usepackage{times}

\usepackage{amsfonts,amsmath,amssymb,algorithm,epsfig,fancyhdr,graphicx,mathrsfs,eucal}
\usepackage{epsf}
\usepackage{url}

\pagestyle{empty}

\newcommand{\RR}{\ensuremath{\mathbb{R}}}

\newcommand{\ZZ}{\ensuremath{\mathbb{Z}}}

\newcommand\inter{\operatorname{int}}

\newcommand\conv{\operatorname{\mathsf{conv}}}
\newcommand\aff{\operatorname{\mathsf{aff}}}

\newcommand\rank{\operatorname{rank}}
\newcommand\qrank{\operatorname{\mathsf{qrank}}}

\newcommand\Quad{\operatorname{Quad}}
\newcommand\Lin{\operatorname{Lin}}
\newcommand\armin{\operatorname{armin}}
\newcommand\trace{\operatorname{trace}}
\newcommand\Sym{\operatorname{\textsf{Sym}}}
\newcommand\Const{\operatorname{Const}}

\newcommand\vertex{\operatorname{\mathsf{vert}}}

\renewcommand{\phi}{\varphi}                 
\renewcommand{\epsilon}{\varepsilon}


\newcommand{\cc}{\mbox{\boldmath$c$}}

\newcommand{\e}{\mbox{\boldmath$e$}}

\newcommand{\p}{\mbox{\boldmath$p$}}

\newcommand{\bt}{\mbox{\boldmath$t$}}

\newcommand{\vv}{\mbox{\boldmath$v$}}
\newcommand{\uu}{\mbox{\boldmath$u$}}

\newcommand{\x}{\mbox{\boldmath$x$}}

\newcommand{\z}{\mbox{\boldmath$z$}}

\newcommand\bA{\mathbf{A}}

\newcommand\bB{\mathbf{B}}


\newcommand\E{\operatorname{E}}


\newcommand\R{\mathbb{R}}

\newcommand\Z{\mathbb{Z}}


\newcommand\cB{\mathcal{B}}

\newcommand\cD{\mathcal{D}}
\newcommand\cF{\textsf{Q}}
\newcommand\cE{\mathcal{E}}
\newcommand\cH{\mathcal{H}}

\newcommand\cQ{\mathcal{Q}}

\newcommand\cV{\mathcal{V}}



\newtheorem{theorem}{Theorem}

\newtheorem{conjecture}{Conjecture}

\begin{document}

\title{A New Algorithm in Geometry of Numbers}

\author{
Mathieu Dutour\\
Laboratory of Radiochemistry\\
Rudjer Boskovic Institute\\
Bijenicka 54, 10 000 Zagreb, Croatia\\
\url{Mathieu.Dutour@ens.fr}\\
\and
Konstantin Rybnikov\\
University of Massachusetts at Lowell\\
Mathematical Sciences\\ One University Ave., MA 01854, USA\\
\url{Konstantin_Rybnikov@uml.edu}\\
}

\maketitle \thispagestyle{empty}


\urlstyle{sf}

\begin{abstract}
A lattice Delaunay polytope $P$ is called \emph{perfect} if its Delaunay sphere is the only ellipsoid circumscribed about $P$. We present a new
algorithm for finding perfect Delaunay polytopes. Our method overcomes the major shortcomings of the previously used method \cite{DutourAdj}. We have
implemented and used our algorithm for finding perfect Delaunay polytopes in dimensions 6, 7, 8. Our findings lead to a new conjecture that sheds
light on the structure of lattice Delaunay tilings.
\end{abstract}

\section{Introduction}
Let $\Lambda$ be an $n$-dimensional lattice ($n \ge 0$) and let $P \subset \Lambda
\otimes \R \cong \R^n$ be a polytope whose vertex set $\vertex P$ belongs to $\Lambda$.
We say that $P$ is a \emph{Delaunay polytope} for $\Lambda$ if $P$ can be circumscribed
by a closed ball $\cB_P \subset \Lambda \otimes \R$ such that $\cB_P \cap
\Lambda=\vertex P$. The ball $\cB_P$ (or its boundary) is commonly referred to as the
\emph{Delaunay sphere} (or empty sphere) for $P$. (Delaunay \cite{Del} himself
attributed the concept of empty sphere to Voronoi.) Delaunay polytopes for $\Lambda$
form a face-to-face tiling of $\Lambda \otimes \R$ called the \emph{Delaunay tiling
}for $\Lambda$.

One can study the geometry of lattices by comparing their Delaunay tilings. Such study
was initiated by Voronoi \cite{Vor2}. As $\Lambda \subset \Lambda \otimes \R \cong
\R^n$ is deformed into $\Z^n \subset \R^n$ by an affine transformation $\x \mapsto
A(\x)$, the empty spheres circumscribed about the Delaunay polytopes of $\Lambda$ are
deformed into empty ellipsoids circumscribed about the $A$-images of these polytopes.
All these ellipsoids have identical quadratic parts -- indeed they are balls in the
metric $d(\x,\x')=||A^{-1}(\x)-A^{-1}(\x')||$. Thus, the study of Delaunay tilings for
$n$-lattices is equivalent to the study of Delaunay tilings for $\Z^n$ with respect to
different positive definite quadratic forms. Let us denote the Delaunay tiling for
$\Z^n$ with respect to a positive quadratic form $\cQ$ by $Del(\Z^n,\cQ)$. The Delaunay
property of an ellipsoid $\cE(\cQ,\cc,R)=\{\x \in \R^n\;|\; \cQ[\x-\cc] \le R^2\}$,
circumscribed about a polytope $P$, means that the quadratic function $\cQ[\x-\cc]-R^2$
is zero on $\vertex P$ and strictly positive on $\Z^n \setminus \vertex P$. From now on
we will be  working with $\Lambda=\Z^n$, unless stated otherwise.

 It is natural to extend the notions of Delaunay polytope and
tiling to positive semidefinite forms. We refer to an $\R$-valued function $f$ on a set
$S$ as \emph{positive} if $f(x)\ge 0$ for any $x \in S$. Let $\cQ$ be a quadratic form
that is positive on $\R^n$ and such that the rank of the sublattice $\ker_{\R} \cQ \cap
\Z^n$ is equal to the dimension  of $\ker_{\R} \cQ$.  Then $\R^n$ is tiled by unbounded
$n$-dimensional \emph{polyhedra,} which are Delaunay with respect to $\cQ$: each
polyhedron $P$ from this family is circumscribed by an elliptic cylinder $\cE_P$, whose
interior is free of lattice points, so that $P\cap \Z^n =\cE_P\cap \Z^n$. Furthermore,
$P$ is the direct affine product of a Delaunay \emph{polytope }$P$ for a sublattice
$\Lambda \subset \Z^n$ of rank $r=\dim \ker_{\R}\cQ$ and an affine $(n-r)$-subspace of
$\R^n$. The degenerate Delaunay ``ellipsoid" $\cE_P$ for an unbounded polyhedron $P$ is
the direct affine product of the Delaunay ellipsoid for $P$ and $L$; we will be using
`ellipsoid' for both bounded and unbounded ellipsoids. For example, $\R^n$ is tiled by
the \emph{unit slabs} $U_i=\{\x \;\vline\; i \le x_1\le i+1\}$ where $i \in \Z$. Each
unit slab is a Delaunay polyhedron with respect to quadratic form $x_1^2$; furthermore,
each $U_i$ coincides with its Delaunay ellipsoid $\cE_{U_{i}}$. Following a common
convention we will be using the word `polytope' only for bounded polyhedra.

Let $\Lambda$ be a lattice of rank $n$ and let $P \subset \Lambda \otimes \R$ be a
\emph{lattice polyhedron}, i.e., the convex hull of a subset of $\Lambda$. Then $P$ is
called \emph{perfect} if there is an $n$-ellipsoid (possibly degenerate) circumscribed
about $P$ and this ellipsoid is unique. Perfect Delaunay polyhedra are also called
extreme (e.g. \cite{DutourAdj}). Perfect Delaunay \emph{polytopes} are rare in small
dimensions, e.g., for $n \le 6$ there are only three such polytopes -- $0$ for $n=0$,
$[0,1]$ for $n=1$, and Gosset's semiregular polytope $2_{21}$ for $n=6$.
 The previous method
\cite{DutourAdj} for finding perfect Delaunay polytopes was based on
an unproven conjecture that every perfect Delaunay polytope is
\emph{basic.} A lattice polytope $P$ is called \emph{basic} if there
exist $v_0,\dots, v_n \in \vertex P$ such that every $v \in \vertex
P$ can be written as $v=\sum_{i=0}^n \lambda_i v_i$ where
$\lambda_1,\dots,\lambda_{n+1} \in \Z$  and $1=\sum_{i=0}^n
\lambda_i$.
 We know that there exist non-basic Delaunay polytopes in higher dimensions (see
 \cite{grishukhin}) and we cannot rule out the existence of
 non-basic perfect Delaunay polytopes.

The perfection property of a Delaunay polytope $P$ with ellipsoid
$\cE(\cQ,\cc,\rho)=\{\x \in \R^n\;|\; \cQ[\x-\cc] \le \rho^2\}$ amounts to that any
quadratic function that vanishes on $\vertex P$ is of the form  $\alpha(\cQ[\x-\cc] -
\rho^2)$ where $\alpha \in \R$. A real-valued quadratic function $F$ on $\R^n$ is
called \emph{perfect} if $\armin F =\min \{F(\z)\;\vline\;\z \in \Z^n\} \ge 0$ and
$\conv \{\z \in \Z^n\;\vline \;F(\z)=\armin F\}$ is a perfect Delaunay polyhedron. The
ellipsoid $\{\x \in \R^n\;\vline \;F(\x)\le \armin F\}$ is also called perfect.

 Erdahl proved that the vertex set of any perfect Delaunay
\emph{polyhedron} splits uniquely (up to arithmetic equivalence) into the direct affine
sum of the vertex set of a perfect Delaunay polytope and a sublattice which is parallel
to the kernel of $\cQ$ \cite{erdahlcone}.
\begin{theorem}{\protect \emph{\cite{erdahlcone}}} A polyhedron
$P \in Del(\Z^n,\cQ)$ is perfect
 if and only if   \[P \cap \Z^n=\{v + \z \; \vline \; v \in \vertex D,\: \z \in \Gamma\},\]
 where
$D$ is a perfect \emph{polytope } from $Del(\Z^n \cap \aff D,\cQ)$ and $\Gamma$ is a
submodule of $\Z^n$ such that $\Z^n$ is the direct sum of modules $(\Z^n \cap \aff
D)-(\Z^n \cap \aff D)$ and  $\Gamma$. If $(D',\Gamma')$ is another pair with these
properties, then $\Gamma'=\Gamma$ and $D'=A(D)$, where $A$ is an \emph{affine}
automorphism of $\Z^n$.
\end{theorem}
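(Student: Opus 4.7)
The plan is to use the factorization of a general Delaunay polyhedron, recalled in the introduction, into a transverse bounded cross-section and the kernel directions of $\cQ$, and then to transfer perfection back and forth between $P$ and this cross-section.

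First, set $K := \ker_\R \cQ$ and $\Gamma_0 := K \cap \Z^n$, which has full rank $k = \dim K$ by the standing hypothesis on $\cQ$. Choose an affine subspace $H$ transverse to $K$ so that $(\Z^n \cap H) - (\Z^n \cap H)$ is a direct complement of $\Gamma_0$ inside $\Z^n$, and set $D := P \cap H$. The factorization recalled before the theorem says that $D$ is a bounded Delaunay polytope of $\Z^n \cap \aff D$ under $\cQ|_{\aff D}$, that $P$ is the direct affine product of $D$ with an affine subspace parallel to $K$, and that $\cE_P$ is the $K$-cylinder over the Delaunay ellipsoid $\cE_D$. Intersecting this cylinder with $\Z^n$ and invoking $P \cap \Z^n = \cE_P \cap \Z^n$ gives the claimed description $P \cap \Z^n = \vertex D + \Gamma_0$.

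Next, I would prove both implications through this factorization. For the forward direction, any ellipsoid $\cE'$ in $\aff D$ circumscribed about $D$ extends by $K$-cylindering to a degenerate $n$-ellipsoid circumscribed about $P$; perfection of $P$ then forces this extension to coincide with $\cE_P$, hence $\cE' = \cE_D$ and $D$ is perfect. For the converse, assume $P \cap \Z^n = \vertex D + \Gamma$ with $D$ perfect and the direct-sum condition satisfied, and consider any ellipsoid with defining function $F'(\x) = \cQ'[\x - \cc'] - \rho'^2$ circumscribed about $P$. Evaluating the symmetrised identity
\[
F'(\x + \gamma) + F'(\x - \gamma) - 2 F'(\x) = 2\,\cQ'[\gamma]
\]
at any $\x \in \vertex D$ kills the left-hand side, since all three translates lie in $P \cap \Z^n$, so $\cQ'[\gamma] = 0$ on $\Gamma$; positive semidefiniteness of $\cQ'$ then forces $\cQ'|_K \equiv 0$. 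The unsymmetrised identity $F'(\x+\gamma)-F'(\x) = 2 B'(\x-\cc',\gamma) + \cQ'[\gamma]$ then yields that $\aff D$ is $\cQ'$-orthogonal to $K$, so $F'$ descends to a quadratic function on $\aff D$ vanishing on $\vertex D$. By perfection of $D$ this descent is a scalar multiple of the defining form of $\cE_D$, and lifting back identifies $\cE'$ with $\cE_P$, so $P$ is perfect.

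Finally, the converse computation applied with $\cQ' = \cQ$ itself shows that any admissible $\Gamma$ lies in $K$, and the direct-sum condition then fixes $\Gamma = \Gamma_0$, giving $\Gamma' = \Gamma$. For a second cross-section $D'$, the two lattices $(\Z^n \cap \aff D) - (\Z^n \cap \aff D)$ and its primed analogue are two direct complements of $\Gamma_0$ inside $\Z^n$, and the natural isomorphism between them extended by the identity on $\Gamma_0$ is the desired affine $\Z$-automorphism carrying $D$ to $D'$ after a translation inside $\Gamma_0$ that aligns affine hulls. I expect the hardest step to be the converse implication: one must argue that a candidate $\cE'$ is genuinely a positive semidefinite $n$-ellipsoid and that its cylindrical structure over $\aff D$ is actually forced by the lattice-point condition, before one can safely invoke perfection of $D$ in the strictly lower-dimensional setting.
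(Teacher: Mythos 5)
The paper does not actually contain a proof of this theorem: it is attributed to Erdahl and cited from \cite{erdahlcone}, with no argument supplied in the text. So there is no internal proof to compare your sketch against, and what I can offer is an assessment of the sketch on its own terms.

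Your overall strategy --- factor $P$ as a cylinder over a bounded transverse section $D$ along $K=\ker_\R\cQ$, then transfer perfection in both directions by restricting and cylindering quadratic functions --- is the natural one and is essentially what a proof along Erdahl's lines must do. The forward direction (cylinder an ellipsoid of $\aff D$ to one circumscribing $P$, invoke perfection of $P$) is fine. In the converse, though, there are real slips. First, you conflate $K$ with $\span_\R\Gamma$: from the symmetrised identity you only learn $\cQ'[\gamma]=0$ for $\gamma\in\Gamma$, hence, by polarization on a lattice basis of $\Gamma$, that $\cQ'$ vanishes identically on $\span_\R\Gamma$; calling this $\cQ'|_K\equiv 0$ is unjustified until you have separately shown $\span_\R\Gamma=K$ (which does follow, e.g. by applying the same identity with $\cQ'=\cQ$ and using the direct-sum rank count, but it has to be said). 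Second, you need none of the positive semidefiniteness you invoke. Perfection of $P$ is the statement that \emph{every} quadratic function vanishing on $P\cap\Z^n$ is a scalar multiple of $F_P$, not only those coming from ellipsoids; working with an arbitrary quadratic $F'$, the vanishing of $\cQ'$ on $\span_\R\Gamma$ already follows by polarization as above, and $B'(\x-\cc',\gamma)=0$ follows from the unsymmetrised identity, again without any positivity. Your closing worry that ``one must argue that $\cE'$ is genuinely a positive semidefinite ellipsoid'' is therefore a red herring: you should drop the ellipsoid framing entirely and argue for quadratic functions. Finally, the uniqueness clause $D'=A(D)$ is under-developed. Producing an affine automorphism of $\Z^n$ that identifies the two complementary sublattices and fixes $\Gamma_0$ only aligns $\aff D$ with $\aff D'$; you still have to show the resulting map carries the vertex set of $D$ onto that of $D'$, which requires using the equality $\vertex D+\Gamma=\vertex D'+\Gamma$ and the transversality of both sections to $\span_\R\Gamma$. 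As written, that step is asserted rather than proved.
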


The fundamental importance of perfect Delaunay polyhedra is
explained by the following theorem of Erdahl \cite{Toronto}.
\begin{theorem}
Let $D$ be a Delaunay polytope for $\Z^n$. Then
\[D=\bigcap_{i=1}^k
P_i,\] where each $P_i$ is a perfect Delaunay polyhedron for $\Z^n$
with respect to some positive form $\cQ_i$ such that $\dim
\ker_{\R}\cQ_i=\rank (\ker_{\R}\cQ_i \cap \Z^n)$.
\end{theorem}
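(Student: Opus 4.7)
My plan is to realize $D$ as an intersection of perfect Delaunay polyhedra arising from the extreme rays of the \emph{Erdahl cone} of $D$. Fix any positive definite form $\cQ_0$ for which $D$ is Delaunay and let $F_0(\x)=\cQ_0[\x-c_0]-\rho_0^2$ be its ellipsoid function, so that $F_0\ge 0$ on $\Z^n$ and $F_0=0$ exactly on $\vertex D$. Let $\cC(D)$ be the closed convex cone, inside the finite-dimensional space of quadratic polynomials on $\R^n$, consisting of all $F$ with $F|_{\vertex D}=0$ and $F(z)\ge 0$ for every $z\in\Z^n$. Since $F_0\in\cC(D)$, the cone is nontrivial. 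The first step is to show that $\cC(D)$ is polyhedral with finitely many extreme rays: the equalities $F(v)=0$ cut out a linear subspace, while coercivity of $F_0$ implies that on any compact cross-section of the cone only finitely many of the inequalities $F(z)\ge 0$ can be active, making $\cC(D)$ finitely generated. Let $F_1,\dots,F_k$ generate its extreme rays, with quadratic parts $\cQ_i$, and put $P_i:=\conv\{z\in\Z^n\ind F_i(z)=0\}$.

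Next I would verify that each $P_i$ is a perfect Delaunay polyhedron satisfying the kernel-rank condition. Non-negativity of $F_i$ on $\Z^n$ forces every direction in $\ker_{\R}\cQ_i$ to be a limit of $\Z^n$-directions (otherwise a translation along such a direction would eventually push some lattice value of $F_i$ below zero), giving $\dim\ker_{\R}\cQ_i=\rank(\ker_{\R}\cQ_i\cap\Z^n)$. Perfection of $P_i$ is extracted directly from extremeness: if $G$ is any quadratic with $G|_{\vertex P_i}=0$ and $G\ge 0$ on $\Z^n$, then $G\in\cC(D)$ because $\vertex D\subseteq\vertex P_i$. For sufficiently small $t>0$, $F_i-tG$ also lies in $\cC(D)$ (it vanishes on $\vertex D$, on $\Z^n\setminus\vertex P_i$ the strictly positive values of $F_i$ dominate $tG$, and on $\vertex P_i$ both $F_i$ and $G$ vanish), so the decomposition $F_i=tG+(F_i-tG)$ combined with extremeness of the ray $\R_{>0}F_i$ forces $G\in\R_{>0}F_i$; hence the circumscribing ellipsoid of $P_i$ is unique.

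The inclusion $D\subseteq\bigcap_i P_i$ is immediate since $\vertex D\subseteq\vertex P_i$ implies $D=\conv\vertex D\subseteq\conv\vertex P_i=P_i$. The reverse inclusion is the main obstacle. Because $P_i\subseteq\{F_i\le 0\}$, it suffices to exhibit, for any $x\notin D$, some $F\in\cC(D)$ with $F(x)>0$, since the decomposition $F=\sum\lambda_i F_i$ into extreme rays then forces $F_i(x)>0$ for some $i$ and hence $x\notin P_i$. If $x\notin\aff D$, then any affine functional $\ell$ vanishing on $\aff D$ with $\ell(x)\ne 0$ supplies $F:=\ell^2\in\cC(D)$. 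The case $x\in\aff D\setminus D$ I would handle by induction on $\dim D$: apply the theorem inside the sublattice $\Z^n\cap\aff D$ (of full rank in $\aff D$) to produce a perfect quadratic $F'$ on $\aff D$ with $F'(x)>0$, then lift it to $F(y):=F'(\pi(y))+M\cdot d(y,\aff D)^2$ on $\R^n$, where $\pi$ is orthogonal projection onto $\aff D$ and $M$ is chosen large enough that $F\ge 0$ on $\Z^n$; this is possible because rationality of $\aff D$ ensures that $d(z,\aff D)$ has a positive minimum on $\Z^n\setminus\aff D$. The most delicate part is exactly this lifting together with its base case $\dim D=0$, which can be settled by taking $F=\sum_{j=1}^n\ell_j^2$ for $n$ independent affine functionals each vanishing at the single vertex of $D$.
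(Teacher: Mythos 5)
The paper does not give a proof of this theorem; it is stated as a result of Erdahl and cited to his 2000 talk \cite{Toronto}, so there is no in-paper argument to compare yours against. Your framework — pass to the cone $\cC(D)$ of quadratic functions vanishing on $\vertex D$ and nonnegative on $\Z^n$, argue it is finitely generated, and match its extreme rays with perfect Delaunay polyhedra containing $D$ — is the natural one and is consistent with the cone-theoretic setup in Section 2. But each of your three steps has a real gap, and together they amount to most of the theorem.

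On finiteness of extreme rays: the assertion that "on any compact cross-section of the cone only finitely many of the inequalities $F(z)\ge 0$ can be active" is false. Already for $n=2$ and $D=[0,1]^2$ one has $\cC(D)=\{a\,x(x-1)+b\,y(y-1):a,b\ge 0\}$, and at the boundary ray $a=1,\ b=0$ the constraint $F(z)\ge 0$ is tight for every $z=(z_1,z_2)$ with $z_1\in\{0,1\}$ — infinitely many. Coercivity of $F_0$ localizes the constraints near the interior ray $\R_{>0}F_0$, but says nothing near extreme rays with degenerate quadratic part, and those are exactly the rays one must control. What is true (and must be proved) is that the infinite active family is \emph{redundant} modulo finitely many of its members; that is the content of Erdahl's result, not a consequence of compactness plus coercivity.

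On perfection of $P_i$: you need $F_i-tG\in\cC(D)$ for some $t>0$, and on $\Z^n\setminus\vertex P_i$ you claim "the strictly positive values of $F_i$ dominate $tG$." This requires $\sup_{z}G(z)/F_i(z)<\infty$ over the lattice points off $P_i$, which does not follow from what you have: if $\ker\cQ_i$ contains a lattice direction along which $G$ grows quadratically while $F_i$ grows only linearly, the ratio is unbounded. Separately, the paper's definition of perfection requires uniqueness of the circumscribing ellipsoid among \emph{all} quadratic functions vanishing on $\vertex P_i$, not only those nonnegative on $\Z^n$; your argument addresses only the latter, and the reduction of the general case to the nonnegative one runs into the same unbounded-ratio obstacle.

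On the inclusion $\bigcap_i P_i\subseteq D$: your reduction leaves two cases, $x\notin\aff D$ and $x\in\aff D\setminus D$, with the latter "handled by induction on $\dim D$." But for a full-dimensional Delaunay polytope — the principal case — $\aff D=\R^n$, so the first case is empty and "apply the theorem inside $\Z^n\cap\aff D$" is literally the statement to be proved; the induction does not descend. Some genuinely different mechanism is needed here, e.g.\ exploiting the Delaunay tiling of $(\Z^n,\cQ_0)$: a point $x\notin D$ lies in another cell $D'$, the difference $F_0-F_{D'}$ is affine, and one must show that a suitable member of $\cC(D)$ built from such wall-crossing data is positive at $x$. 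As written, this step of your proof is circular in the main case.
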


 For $S \subset \Z^n$ we define $\qrank S$, the \emph{quadratic}
 \emph{rank} of $S$, as the dimension of the space of quadratic functions on
 $\R^n$  that vanish on $S$.
  Let $P$ and $P_1$ be two perfect Delaunay
\emph{polytopes} for $\Z^n$ such that $\qrank (\vertex P \cap \vertex P_1)=2$. In this
case we call $P$ and $P_1$ \emph{adjacent.} If two perfect Delaunay $n$-polytopes $P$
and $P'$ can be connected by a sequence of perfect Delaunay $n$-polytopes in which
every two consecutive members are adjacent, then we will say that $P$ and $P'$ belong
to the same \emph{adjacency component}. We developed a method for finding an adjacency
component for a perfect Delaunay polytope. We have found that for each $n \le 8$ all
known perfect Delaunay $n$-polytopes belong to the same adjacency component. This
finding makes compelling the following conjecture.

\begin{conjecture} For any $n \in \mathbb{N}$ all perfect Delaunay $n$-polytopes
belong to the same adjacency component.
\end{conjecture}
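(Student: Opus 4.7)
The plan is to reformulate the conjecture in terms of Erdahl's cone $\Omega_n$ of quadratic functions $F$ on $\R^n$ that satisfy $F(\z)\ge 0$ for every $\z\in\Z^n$, and then attack the resulting connectivity problem convex-geometrically. A perfect Delaunay $n$-polytope corresponds, up to positive scaling and translation of its defining form $\cQ[\x-\cc]-\rho^2$, to an extreme ray of $\Omega_n$. The first step is to verify that adjacency in the sense $\qrank(\vertex P \cap \vertex P_1)=2$ is precisely the condition that the two associated extreme rays span a common two-dimensional face of $\Omega_n$: the quadratic functions vanishing on $\vertex P \cap \vertex P_1$ cut out a two-dimensional linear subspace of the space of all quadratic functions, and its intersection with $\Omega_n$ should be a planar wedge whose bounding rays are exactly the perfect forms of $P$ and $P_1$.

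Granting this reformulation, the conjecture becomes the connectedness of the $1$-skeleton of $\Omega_n$. I would attack it by adapting Voronoi's algorithm for perfect quadratic forms: beginning at an extreme ray $r$, enumerate every $2$-face of $\Omega_n$ incident to $r$, find the unique opposite extreme ray along each such face, and iterate, producing the adjacency component of the starting polytope. To show that this component exhausts all extreme rays, I would then run a deformation argument. Given two perfect forms $F_0$ and $F_1$, connect them by a path $F_t$ in the ambient cone of nonnegative quadratic forms; as $t$ varies, the combinatorics of $Del(\Z^n,F_t)$ changes only when $F_t$ meets a lower-dimensional face of $\Omega_n$, and each such crossing should translate into a chain of adjacencies relating the perfect polytopes appearing before and after the crossing. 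Erdahl's decomposition (Theorem 2) guarantees that every Delaunay polytope encountered along $F_t$ is controlled by finitely many perfect polyhedra, which provides the bookkeeping needed to turn a continuous path into a finite sequence of adjacencies.

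The main obstacle is that $\Omega_n$ is noncompact and combinatorially very large: unlike the Ryshkov polyhedron of Voronoi's theory of perfect forms, the quotient $\Omega_n/\mathsf{GL}_n(\Z)$ already has infinitely many extreme rays for moderate $n$, so a generic deformation path could meet infinitely many $2$-faces in any compact neighborhood. Establishing a usable local-finiteness statement, and then ruling out the possibility of isolated adjacency components -- in particular, showing that conjectural non-basic perfect Delaunay polytopes cannot form a separate component disjoint from the basic ones -- will be the core difficulty. A secondary obstacle is confirming that every two-dimensional face of $\Omega_n$ really terminates in two perfect extreme rays rather than in a ray that wanders off to infinity along a non-perfect direction; this is exactly where the precise interaction between $\qrank$ and perfection must be controlled, and without such control the adjacency graph may fail to be locally finite even when it is connected.
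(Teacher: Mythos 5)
The statement you are asked to prove is Conjecture~1, and the paper does not prove it: it is stated as a conjecture supported only by the computational observation that all known perfect Delaunay $n$-polytopes for $n\le 8$ fall into a single adjacency component. So there is no ``paper proof'' to compare against, and any purported proof would have to stand entirely on its own.

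Your reformulation is faithful to the paper's framework. Interpreting perfect Delaunay polytopes as extreme rays of Erdahl's cone (the cone the paper denotes $\cC(n)$, dual to the Erdahl polyhedron $\E(n)$), and adjacency as sharing a two-dimensional face, is exactly the paper's setup, and the Voronoi-style ``rotate a hyperplane about a ridge until it hits a new lattice point'' traversal is exactly the paper's algorithm. That much is correct and on target. But your argument stops being a proof precisely at the step you flag as a ``deformation argument.'' The claim that as $F_t$ crosses a lower-dimensional face of $\Omega_n$ ``each such crossing should translate into a chain of adjacencies relating the perfect polytopes appearing before and after the crossing'' is asserted, not established. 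That assertion \emph{is} the conjecture, restated along a path; nothing in the sketch shows that a Delaunay wall-crossing produces an edge of the adjacency graph, nor that the perfect polytopes ``controlling'' $Del(\Z^n,F_t)$ via Erdahl's Theorem~2 vary by adjacencies as $t$ varies. Theorem~2 decomposes a single Delaunay polytope as an intersection of perfect Delaunay polyhedra for \emph{various} forms $\cQ_i$, but those polyhedra are generally unbounded, and the paper's adjacency relation is defined only between perfect Delaunay \emph{polytopes} (bounded facets of $\E(n)$); there is no mechanism in the sketch that converts relations among unbounded perfect polyhedra into a path in the bounded-facet graph.

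The obstacles you name in the last paragraph are genuine and unresolved, and they are not peripheral: the paper itself reports that unbounded perfect Delaunay polyhedra arise unavoidably during the traversal for $n\ge 7$, that the algorithm cannot enumerate the ridges of such polyhedra, and that as a consequence the authors ``cannot guarantee that our algorithm has found all perfect ellipsoids in dimensions 7 and 8.'' In other words, even the finite computational claim underlying the conjecture is incomplete for $n=7,8$. Your sketch would at minimum need: (a) a local-finiteness lemma for the $2$-faces of $\cC(n)$ incident to a given extreme ray, including those whose other end is an unbounded perfect polyhedron; (b) a proof that every $2$-face is bounded by two perfect rays rather than being a half-plane; and (c) a discrete homotopy lemma turning a continuous path in the cone of positive semidefinite forms into a walk in the adjacency graph. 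None of these is provided, and (a) and (b) are exactly the issues the paper leaves open. As written, the proposal is a reasonable research plan, not a proof.
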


\section{Space of quadratic functions}
Let us denote by $\Sym(n)$ the space of real symmetric $n \times n$ matrices; by
interpreting an element of $\Sym(n)$ as a Gram matrix, we can  regard $\Sym(n)$ as the
space of quadratic forms with real coefficients. Denote by $\cF(n)$ the linear space of
quadratic functions on $\R^n$ and by $\cF_0(n) \subset \cF(n)$ the subspace of
functions with zero constant term. Since a quadratic function can be represented
uniquely as the sum of a quadratic form, a linear functional, and a constant, it is
convenient to introduce the projection operators
\[\Quad:\cF(n) \rightarrow \Sym(n),\; \Lin:\cF(n) \rightarrow {\R^n}^*,\;\]
\[\Const:\cF(n) \rightarrow \R.\] For two quadratic forms with Gram matrices $\bA$ and
$\bB$ we define the dot product on $\Sym(n)$ as $\trace(\bA\bB)$. For linear functions
defined by covectors $a$ and $b$ the dot product is just $a\cdot b$. The dot product on
$\cF_0(n)$ is defined as the direct sum of the dot products on $\Sym(n)$ and
${\R^n}^*$.

The main idea of this paper is in interpreting quadratic functions on $\R^n$ as
elements of $\cF_0(n)^*$, the dual of $\cF_0(n)$. There is a natural correspondence
between ellipsoids in $\R^n$ and closed (affine) halfspaces of $\cF_0(n)$. Namely, if
$\cE=\{\x \in \R^n\;|\; \cQ[\x-\cc] \le \rho^2\}$, then the corresponding halfspace
$H_{\cE}$ is $\{X \in \cF_0(n)\; | \; X \cdot F \ge \rho^2 - \cQ[\cc]\}$, where $F$ is
the quadratic function defined by $F(\x)=\cQ[\x]-2\cQ(\x,\cc)$.

Let $\cD$ be a map from $\R^n$ into
 $\cF_0(n)$ defined in the matrix notation by
\[\cD: \uu \mapsto (\x \mapsto  \x^T(\uu\uu^T)\x+\uu^T\x),\]
where $\x$ and $\uu$ are treated as column vectors. Obviously, $\cD$ takes an integer
vector to a quadratic function with integer Gram matrix and integer linear part; such
quadratic function is called classically integer. The map $\cD$ resembles the Voronoi
map $\cV:\R^n \rightarrow \Sym(n)$ that takes a vector $\uu$ to the quadratic form with
Gram matrix $\uu\uu^T$ (see \cite{RB79} for details). Thus, we have $\cD(\uu)= \cV(\uu)
+\uu^*$, where $\uu^*$ is linear functional dual to $\uu$. The map $\cV$ can be seen as
the quadratic Veronese map from $\R^n$ to  $\Sym(n)$, although in contemporary
literature the Veronese map is usually defined in the projective setup.
We call an ellipsoid in $\R^n$ \emph{empty} if its interior is free of points of
$\Z^n$.
 If $\cE=\{\x \in \R^n\;|\;\cQ[\x-\cc] \le \rho^2\}$ is an empty ellipsoid,
then $H_{\cE}$ contains all of $\cD(\Z^n)$. Thus, $\cD(\Z^n) \subset \underset{\{\cE\:
\textrm{is\: empty}\}}{\bigcap} H_{\cE}$. The right hand side is the intersection of
infinitely many halfspaces, which can be replaced by the intersection of only those
halfspaces whose boundaries are completely determined by the elements of $\cD(\Z^n)$
that lie on them. Throughout the paper we use $\conv S$ to denote the convex hull of a
set  $S \subset \R^n$ and $\aff S$ to denote the minimal affine subspace containing
$S$.

 We define the \emph{Erdahl polyhedron} $\E(n)$ as the
intersection of closed halfspaces $H_{\cE}$ such that $\cE$ is empty and
$\rank(\partial H_{\cE} \cap \cD(\Z^n)) = \dim \cF_0(n).$

\begin{theorem} The set $\cD(\Z^n)$ coincides with $\cD(\R^n) \cap \partial {\E}(n)$.
Furthermore,

\[ \conv \cD(\Z^n)= \E(n).\]
\end{theorem}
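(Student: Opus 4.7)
The plan is to establish $\conv\cD(\Z^n)=\E(n)$ first and then derive the characterization $\cD(\Z^n)=\cD(\R^n)\cap\partial\E(n)$ as a corollary. The central algebraic identity behind the whole argument is the pairing formula
\[ \cD(\z)\cdot F_{\cE} \;=\; F_{\cE}(\z),\qquad F_{\cE}(\x):=\cQ[\x]-2\cQ(\x,\cc), \]
valid for any $\z\in\R^n$ and any ellipsoid $\cE=\{\x:\cQ[\x-\cc]\le\rho^2\}$; it is a one-line computation using $\cD(\z)=\z\z^T+\z^*$ and the direct-sum inner product on $\cF_0(n)=\Sym(n)\oplus(\R^n)^*$. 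Via this identity, $\cD(\z)\in H_{\cE}$ is equivalent to $\cQ[\z-\cc]\ge\rho^2$, i.e.\ $\z\notin\inter\cE$, and $\cD(\z)\in\partial H_{\cE}$ is equivalent to $\z\in\partial\cE$.

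The first inclusion $\conv\cD(\Z^n)\subseteq\E(n)$ is then immediate: for any empty $\cE$ one has $\cD(\Z^n)\subseteq H_{\cE}$, and intersections of closed halfspaces are convex. To show in addition that each $\cD(\z)$ lies on $\partial\E(n)$, I would apply Theorem 2 to any full-dimensional Delaunay polytope $D$ for the standard form on $\Z^n$ having $\z\in\vertex D$: writing $D=\bigcap_iP_i$ with each $P_i$ a perfect Delaunay polyhedron, the inclusion $D\subset P_i$ together with the empty-sphere property of $P_i$ forces $\z\in\vertex P_i$, and perfection of $P_i$ says precisely that $\cD(\vertex P_i)$ affinely spans the hyperplane $\partial H_{\cE_{P_i}}$. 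Thus $H_{\cE_{P_i}}$ appears in the defining family of $\E(n)$ and contains $\cD(\z)$ on its boundary.

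For the reverse inclusion $\E(n)\subseteq\conv\cD(\Z^n)$, I would argue by contraposition. Given $X\in\cF_0(n)\setminus\conv\cD(\Z^n)$, the separation theorem in the finite-dimensional space $\cF_0(n)$ produces a closed halfspace $\{X':X'\cdot F\ge t\}$ containing $\conv\cD(\Z^n)$ but missing $X$; by the pairing, $F(\z)\ge t$ for all $\z\in\Z^n$. The main technical point is to promote this halfspace to one of the form $H_{\cE}$ with $\cE$ empty and the rank condition satisfied. If $\Quad F$ had any negative eigendirection, one could choose an integer vector $\vv$ of negative curvature and drive $F(t\vv)\to-\infty$, so $\Quad F\succeq 0$; completing the square then writes $F-t=\cQ[\x-\cc]-\rho^2$ for an empty (possibly degenerate) ellipsoid $\cE$. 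Sliding the separating hyperplane into facet-contact with $\conv\cD(\Z^n)$ turns the set of lattice points on which $F=t$ into the vertex set of a perfect Delaunay polyhedron, supplying the rank hypothesis and yielding a halfspace $H_{\cE}$ from the defining family of $\E(n)$ that excludes $X$.

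Finally, for the identity $\cD(\R^n)\cap\partial\E(n)=\cD(\Z^n)$, the inclusion $\supseteq$ has already been handled. For $\subseteq$, take $\uu\in\R^n$ with $\cD(\uu)\in\partial\E(n)=\partial\conv\cD(\Z^n)$ and write $\cD(\uu)=\sum_i\lambda_i\cD(\z_i)$ as a finite convex combination of points of $\cD(\Z^n)$. Equating components in $\Sym(n)$ and $(\R^n)^*$ gives simultaneously $\uu=\sum_i\lambda_i\z_i$ and $\uu\uu^T=\sum_i\lambda_i\z_i\z_i^T$, which rearrange to $\sum_i\lambda_i(\z_i-\uu)(\z_i-\uu)^T=0$; positive semidefiniteness of the rank-one summands forces $\z_i=\uu$ whenever $\lambda_i>0$, hence $\uu\in\Z^n$. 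I expect the real obstacle to be the facet-promotion step in the previous paragraph — controlling the quadratic part of the separating functional and actually attaining the rank hypothesis in the definition of $\E(n)$, possibly after a limiting argument involving degenerate ellipsoidal cylinders — while everything else reduces to direct computation or to Erdahl's theorem.
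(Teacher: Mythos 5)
Your plan is structurally sound and coincides with the paper's only on the $\E(n) \subseteq \conv\cD(\Z^n)$ step; the other two parts take genuinely different routes. For $\cD(\Z^n) \subset \partial\E(n)$ the paper is far more elementary: it exhibits, for each $\z\in\Z^n$, the perfect unit slab $\cE = \{\x : u_1 \le x_1 \le u_1+1\}$ through $\z$ and observes that $\partial\cE$ is the unique quadric through $\Z^n\cap\partial\cE$, so $H_{\cE}$ belongs to the defining family and $\cD(\z)\in\partial H_{\cE}$. Invoking Erdahl's structure theorem (Theorem~2) as you do is valid but heavy -- that theorem is morally a statement about the face structure of $\E(n)$ itself, so routing through it is roundabout even if not formally circular. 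Conversely, for $\cD(\R^n)\cap\partial\E(n) \subseteq \cD(\Z^n)$ your rank-one PSD factorization $\sum_i\lambda_i(\z_i-\uu)(\z_i-\uu)^T = 0 \Rightarrow \z_i=\uu$ is cleaner and more self-contained than the paper's, which again uses a slab: if $\x\notin\Z^n$ then $\x$ lies in the open interior of some integer unit slab, whence $\cD(\x)\notin H_{\cE}$ and so $\cD(\x)\notin\E(n)$.

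On the third step both you and the paper hit the same wall. The paper restricts to $X\in\partial\E(n)$, takes the perfect halfspace $H_{\cE}$ supporting $X$, picks a facet $C$ of $\conv\cD(\cE\cap\Z^n)$ separating $X$, sets up the one-parameter hinging family $\cH(\theta)$, and asserts that $\theta_m>0$ and that some new lattice point $\uu$ is hit at $\theta_m$ -- then explicitly omits the proof of both claims, citing Voronoi's continuous-variation method. Your separation-plus-promotion argument (positive semidefiniteness of $\Quad F$ via a negative eigendirection, then ``sliding into facet-contact'') is the same idea in slightly different clothing, and the promotion step you single out as the real obstacle is exactly the pair of claims the paper declines to prove. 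So the gap you perceive is genuine, but it is the paper's gap too: neither proof is complete at that point, and both ultimately defer to the same classical geometry-of-numbers lemma.
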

\begin{proof} Each $\z \in \Z^n$ belongs to the \emph{boundary} of an empty degenerate
ellipsoid \begin{equation*} \cE=\{\x \in \R^n\;\; \vline \;\; \z \cdot \e_1 \le \x
\cdot \z \le \z\cdot \e_1+1 \}. \end{equation*} It is easy to check that $\partial \cE$
is the only quadratic surface passing through $\Z^n \cap
\partial \cE$ (or see \cite{erdahlcone} for a proof). Thus, $\rank
\partial H_{\cE} \cap \cD(\Z^n)=\dim \cF_0(n)$ and $\cD(\Z^n)
\subset \partial {\E}(n)$.

Let  $X \in \cD(\R^n) \cap \partial {\E}(n)$. Then there is $\x \in \R^n$ with
$X=\cD(\x)$.  If $\x \notin \Z^n$ (for $\x \in \Z^n$ see above), then $\x=\uu+\x'$,
where $\uu \in \Z^n$ and $\x' \in [0,1]^n$. Without loss of generality assume that
$x_1' \notin \Z$. Then $\x \in \inter \cE$, where $\cE=\{\x\; \vline\;u_1 \le x_1 \le
u_1+1\}$, which means $X \notin H_{\cE}$, contradicting our choice of $X$. Thus, the
only points of $\R^n$ that are mapped by $\cD$ on $\partial \E(n)$ are elements of
$\Z^n$ and the first claim of the theorem is proven.

Suppose $X \in \E(n)$ and let us show $X \in \conv \cD(\Z^n)$. It is enough to prove
this implication for $X \in \partial \E(n)$. If $X \in \partial \E(n)$, then $X$ lies
on some $H_{\cE}$, where $\cE$ is empty and completely determined by elements of $\Z^n$
that lie on its boundary.  If $X \notin \conv \cD(\cE \cap \Z^n)$, then there is a
facet $C$ of  $\conv \cD(\cE \cap \Z^n)$ such that $X$ and the relative interior of
$\conv \cD(\cE \cap \Z^n)$ lie in the different halfspaces of $H_{\cE}$ with respect to
$\aff C$. Let $f_{\cE}(x)\ge 0$ be an affine inequality defining $H_{\cE}$ and let
$g_C(x)=0$ be an affine equation of the hyperplane passing through the point $I$ (where
$I:\x \mapsto \x^T\x$) and $\aff C$ such that $g_C(X)<0$. The equation of any
hyperplane in $\cF_0(n)$ passing through $\aff C$ can be written as $f_{\cE}(x)+\theta
g_C(x)=0$ for some $\rho \in \R$. Let us define
\[\theta_{m}=\sup \{\theta \in \R \;\; \vline\;\; \forall \z \in \Z^n\,f_{\cE}(\cD(\z))+\theta g_C(\cD(\z))\ge 0 \}\]
We claim that $\theta_{m}>0$ and there exists $\uu \in \Z^n \setminus E$ such that
\[f_{\cE}(\cD(\uu))+\theta_{m} g_C(\cD(\uu))= 0.\] The proof of these claims (which we
omit due to the space limitations) is based on standard techniques of geometry of
numbers and follows the line of argument used by Voronoi in his first memoir
\cite[Pages~177--179]{VC}. $\Box$

\end{proof}

 If an ellipsoid $\cE$ is empty and $\partial H_{\cE}$ contains $\dim \cF_0(n)+1$ affinely independent
points, then $\cE$ is uniquely determined by the points of $\Z^n$ that lie on its boundary:  in this case $\cE$ is called a \emph{perfect ellipsoid}
for lattice $\Z^n$. Perfect ellipsoids were introduced by Erdahl \cite{E75,erdahlcone}. Thus,
\[\E(n)=\conv \cD(\Z^n)=
\underset{\{E\: \textrm{is\: perfect}\}}{\bigcap} H_{\cE}\: .\]

 Note that $\E(n)$  is not a polyhedron in the sense of linear programming, where
 the number of constrains is always assumed to be finite. We will refer to the faces of
 $\E(n)$ of
dimension $\dim \cF_0(n)-1$ as facets and the facets of dimension
$\dim \cF_0(n)-1$
  as faces. The facets of $\E(n)$ correspond to perfect Delaunay polyhedra.
The bounded facets of $\E(n)$ correspond to  perfect Delaunay polytopes. Two perfect Delaunay polytopes are adjacent if the corresponding  facets of
$\E(n)$ share a bounded ridge. Faces of $\E(n)$ correspond to Delaunay polyhedra -- bounded faces to bounded Delaunay polyhedra (polytopes) and
unbounded faces to unbounded polyhedra. There is a great deal of analogy between $\E(n)$  and Voronoi's polyhedron $\Pi(n)$, introduced by Venkov
\cite{venkov} (see also \cite{RB79}). Recall that $\Pi(n)$ is defined as the convex hull of
 $\{\cV(\p)\;|\;\p \in \Z^n\; \textrm{and}\; g.c.d.(p_1,\ldots,p_n)=1\}$, where $\cV: \R^n \rightarrow \Sym(n,\R)$ is the
Voronoi map. The facets of $\Pi(n)$ are defined by closed halfspaces corresponding to
\emph{perfect forms}, which were studied by Voronoi \cite{Vor1} (part I). The bounded
facets of $\Pi (n)$ correspond to positive definite perfect forms.

\subsection{Geometry of $\E(n)$}

Denote by $Aff_n(\Z)$ the group of affine automorphisms of $\Z^n$,
 i.e. the group of transformations of the form $A(\z)= L\z+\bt$, where $L \in
GL_n(\Z)$ and $\bt \in \Z^n$. The action of $Aff_n(\Z)$ on $\R^n$
can be naturally lifted to $\cF_0(n)$ by
\[F \mapsto \{\x \mapsto F(A^{-1}(\x))\}.\]

The group $Aff_n(\Z)$ acts on $\E(n)$ in a way somewhat similar to that of $GL_n(\Z)$
acting on $\Pi(n)$. Subsets $V$ and $V'$ of $\R^n$ are called arithmetically equivalent
if there exists $A \in Aff_n(\Z)$ such that $A(V)=V'$. Obviously, arithmetic
equivalence preserves properties of ellipsoids such as the Delaunay property,
emptiness, and perfection. Since there are only finitely many arithmetically distinct
Delaunay polytopes in each dimension (e.g. \cite{DL97}), the boundary of $\E(n)$ has
finitely many distinct arithmetic types of faces. In fact, the definition of perfect
ellipsoid implies that arithmetically equivalent perfect ellipsoids are isometric.

There are beautiful connections between the polytope $\E(n)$ and
Delaunay tilings of $\Z^n$. The projection $\Lin:\cF_0(n)
\rightarrow \R^n$ maps the vertices of $\partial \E(n)$ onto the
points of $\Z^n$. The projection of each face of $\E(n)$ is a
Delaunay polyhedron in $Del(\Z^n,\cQ)$ for some positive quadratic
form $\cQ$. In particular, the projections of facets of $\E(n)$ are
perfect Delaunay polyhedra.
\section{Algorithm}
In this paper we present a practical algorithm that finds all
perfect Delaunay polytopes that belong to the adjacency component of
a known $n$-dimensional perfect Delaunay polytope.
Our algorithm is best explained geometrically in terms of the geometry of $\cF_0(n)$,
although it is easier to implement it in terms of $\cF(n)$ by representing the closed
halfspace $H_{\cE}$ corresponding to an empty ellipsoid
$\cE=\{\x\in\R^n\;\vline\;F(\x)\le 0\}$ by the ray $\R_{+}F$ in $\cF(n)$. In this dual
interpretation we  consider the convex hull $\mathcal{C}(n)$ in $\cF(n)$ of all rays
corresponding to all empty ellipsoids of $\E(n)$. Each extreme ray of the cone
$\mathcal{C}(n)$ is of the form $\R P$, where $P$ is a perfect quadratic function.
Thus, the adjacency between the facets of $\E(n)$ corresponds to the adjacency between
the extreme rays of  the cone $\mathcal{C}(n)$, i.e., facets of $\E(n)$ determined by
perfect functions $F$ and $F'$ are adjacent if an only if the rays $\R F$ and $\R F'$
share a common \emph{$2$-face} of the cone $\mathcal C(n)$. The convenience of this
representation for computing is much due to its homogeneity, that is, affine
transformations of $\R^n$ induce linear transformations on $\cF(n)$.

We record our knowledge of the adjacency component under investigation in the adjacency graph $G(V,E)$, where $V$ is  the set of arithmetic types of
perfect Delaunay polytopes and $\cE$ is  the set of arithmetic types of pairs $(P,P')$, where $P$ and $P'$ are perfect Delaunay polytopes and $\qrank
(\vertex P \cap \vertex P')=2$. Equivalently, one can think of  $V$ as of a set of inequivalent facets of $\E(n)$ and $\cE$ as the a set of
inequivalent ridges of $\E(n)$. In the following subsection we describe the basic step of the algorithm.
\subsection{Step of the Algorithm}
 Let $P \in Del(\Z^n,\cQ)$ be a perfect Delaunay
polytope and let $\cE=\{\x \in \R^n\;|\;F(\x) \le \alpha,\;F \in \cF_0(n)\}$ be its
empty circumscribed ellipsoid. We regard $P$ as a vertex of the adjacency graph
$G(V,E)$. At each moment the algorithm is looking at a particular vertex of this graph.
First we find a subset $S$ of $\vertex P$ with $\qrank S=2$. Let $H_{\cE}=\{X \in
\cF_0(n)\;|\;F\cdot X\ge \alpha\}$ and let $G \cdot X = \beta$ be the equation of the
hyperplane in $\cF_0(n)$ passing through the point $I$ (where $I(\x)=\x^T\x$) and $\aff
\cD(S)$ such that $G \cdot \cD(\vv) \le \beta$ for all $\vv \in \vertex P$. The
equation of any hyperplane passing through $\aff \cD(S)$ can be written as $F\cdot
X+\rho G\cdot X =\alpha+\rho \beta$ for some $\rho \in \R$. Let
\[\rho_{m}=\sup \{\rho \in \R \;\; \vline\;\;\forall \z \in \Z^n \;
F\cdot \cD(\z) +\rho G\cdot\cD(\z)\ge \alpha+\rho \beta\:\}.\] In situations like this
it is often said that the hyperplanes $\{\cH(\rho)\}$, where
\[\cH(\rho)=\{X\;\vline\;F\cdot X+\rho G \cdot X=\alpha+\rho \beta\},\] \emph{hinge} on
the ridge $\aff \cD(S) \cap \E(n)$ of the surface $\partial \E(n)$ and that $\rho$ is
the \emph{hinge parameter.}

 It can be shown (see Theorem 3) there exists  $\uu \in \Z^n
\setminus P$ such that
\[F\cdot \cD(\uu) +\rho_{m} G \cdot \cD(\uu) \ge \alpha+\rho_{m} \beta.\]
The search for $\uu \in \Z^n \setminus P$ such that $F\cdot \cD(\uu) +\rho_{m} G \cdot
\cD(\uu) \ge \alpha+\rho_{m} \beta$ can be interpreted as continuous rotation of the
hyperplane $\cH(\rho)$ from the initial position at $\rho=0$ to the final position at
$\rho=\rho_{m}$ (see Figure 1). For small values of $\rho$  the hyperplane $\cH(\rho)$
intersects with $\E(n)$ only over the ridge $\aff \cD(S)\cap \E(n)$. When $\rho$
reaches the value of $\rho_{m}$ the rotational motion of the hyperplane is stopped by
the point $\cD(\uu)$. $S$ and $\uu$ define a perfect quadratic function and
corresponding perfect Delaunay polyhedron. If the new polyhedron is a \emph{polytope,}
we check whether it is arithmetically equivalent to any of the already discovered
polytopes. If it is a polytope distinct from the previously discovered ones, we add it
to the list of perfect $n$-polytopes and update the adjacency graph.
\begin{figure*}[t]
\begin{center}
\resizebox{!}{300pt}{\includegraphics[clip=false,keepaspectratio=true]{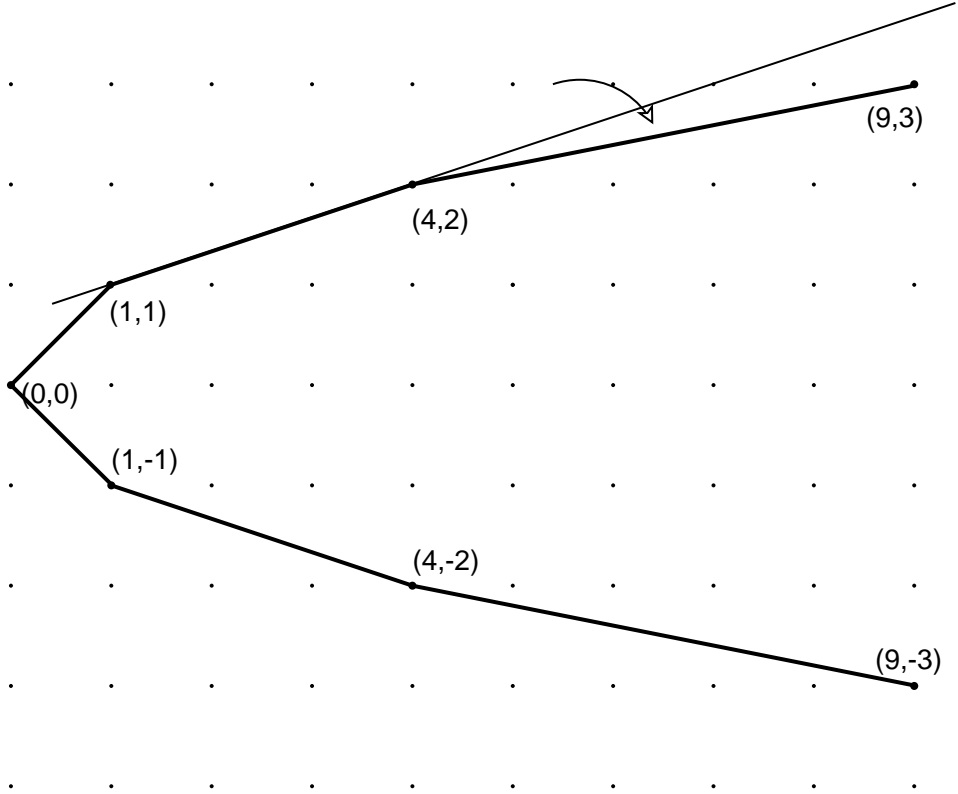}} \caption{Going from facet $\conv\{(1,1),(4,2)\}$ to facet
$\conv\{(4,2),(9,3)\}$. }
\end{center}
\end{figure*}
This procedure is similar to the one used by Voronoi in the determination of perfect
forms in small dimensions. He referred to this procedure as the method of continuous
variation of parameters. The geometric interpretation of Voronoi's method as that of
hinging  hyperplanes was given by Venkov \cite{venkov}. Later this method was
rediscovered in the context of polytopes by \cite{ck-acp-70} and dubbed as ``gift
wrapping method".

The procedures described above paragraph are repeated for each
arithmetic class of subsets of $\vertex P$ of quadratic rank $2$.
When all such subsets are exhausted, we move to another vertex of
the adjacency graph $G(V,E)$

\subsubsection{Finding $\rho_{m}$ and $\uu$}
 Let $S \subset \vertex P$ and let $\qrank S=2$.  Using some heuristic we pick
 some $\z \notin \vertex P$ with $G \cdot \cD(\z) > \beta$
 and construct an ellipsoid $\cE$ through $S$
 and $\z$. We find its center $\cc$ and then look for the closest
 lattice point to $\cc$ in the metric defined by $\cE$. This
test  can be done efficiently for $n \le 9$ using the program \texttt{Lattice-CVP} by Dutour (see \cite{CVP}). If the closest lattice points happens
to be
 at the same distance from $\cc$ as $\z$ and $S$, then we declare
 $\Z^n \cap \cE$ the vertex set of a perfect Delaunay polyhedron. If
 the interior of $\cE$ contains a lattice point $\z'$, then we abandon $\z$ and repeat
 the computation for $S$ and $\z'$, etc.
 \subsection{Using Symmetries in computation}
Our algorithm would be impractical if we failed to use symmetries in an efficient way. Two isomorphism problems had to be addressed. The first is the
problem of checking whether two perfect Delaunay polytopes are arithmetically equivalent. The second problem is finding all arithmetically
inequivalent subsets $S$ of $\vertex P$ with $\qrank S=2$. Algorithms for these problems have been implemented in GAP (some of them are available in
\cite{DuPol}) and rely on the use of the program \texttt{nauty} \cite{McKay}. See also \cite{DutourAdj}.

\subsection{Results}
The graph $G(V,E)$ constructed by the algorithm encodes the adjacency pattern for
\emph{bounded} facets of $\E(n)$. More generally, denote by
$\overline{G}(\overline{V},\overline{E})$ the graph whose vertices are the arithmetic
types of facets of $\E(n)$ and whose edges are arithmetic types of pairs of facets
sharing a common ridge. As of now this graph is completely known only for $n\le 6$. For
$n=6$ $\overline{G}(\overline{V},\overline{E})$ has two vertices, which correspond to
the Gosset $6$-polytope $2_{21}$ and the unit slab $U$. The quadratic form for the
Gosset $6$-polytope is  $E_6$ and that for the unit slab is a rank one form (see Figure
2).
\begin{figure}[h]
\begin{center}
\resizebox{!}{50pt}{\includegraphics[clip=false,keepaspectratio=true]{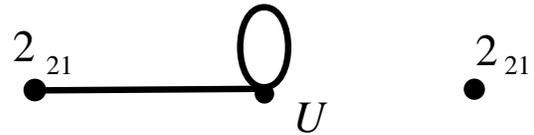}}
\caption{$n=6$. left: $\overline{G}(\overline{V},\overline{E})$,
right: $G(V,E)$.}
\end{center}
\end{figure}

 For $n=7$ the discovered adjacency component of $\overline{G}(\overline{V},\overline{E})$
 has 4 vertices and that of $G(V,E)$ has two vertices (see Figure 3). The latter
 two vertices correspond to the Gosset 7-polytope $3_{21}$ and a polytope
 with 35 vertices discovered earlier by Erdahl and Rybnikov (see \cite{er1}).

\begin{figure*}[t]
\begin{center}
\resizebox{!}{100pt}{\includegraphics[clip=false,keepaspectratio=true]{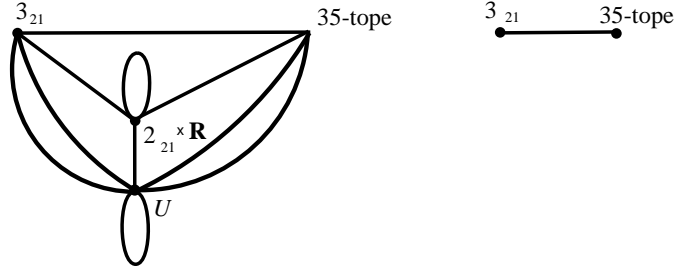}}
\caption{$n=7$.  left: conjectured
$\overline{G}(\overline{V},\overline{E})$, right: conjectured
$G(V,E)$.}
\end{center}
\end{figure*}
 For $n=8$ we have determined an adjacency component of the restricted graph $G(V,E)$.
 Below is the adjacency list of  the conjectured $G(V,E)$ in GAP format. Note that the graph has loops and multiple edges.

 \par \noindent
{\small {\bf 1:} [1, 2]
 \par \noindent {\bf 2:} [2, 16], [2, 27], [2, 8], [2, 10], [2, 22], [2, 4], [2, 5], [2, 13], [2,~7], [2, 6],
[2, 3],  [2, 14], [2, 12], [2, 19], [2, 9], [2, 18], [2, 8], [2, 11], [2, 15], [2, 6],
[2, 11], [2, 2], [2, 17], [2, 1]
 \par \noindent {\bf
3:} [3, 20], [3, 13], [3, 12], [3, 3], [3, 11], [3, 4], [3, 14], [3, 5], [3,~15], [3,
10], [3, 6], [3, 2]
 \par \noindent {\bf
4:} [4, 5], [4, 6], [4, 10], [4, 22],  [4, 3], [4, 2], [4, 8], [4, 14], [4,~20], [4,
19]
 \par \noindent {\bf
5:} [5, 9], [5, 21], [5, 6], [5, 10], [5, 5], [5, 9], [5, 6], [5, 22], [5, 8], [5, 20],
[5, 4], [5, 3], [5, 2]
 \par \noindent {\bf
6:} [6, 22], [6, 10], [6, 5], [6, 4], [6, 10], [6, 3], [6, 2], [6, 9], [6, 6], [6, 5],
[6, 21], [6, 8], [6, 12], [6, 15], [6, 24], [6, 6], [6, 7], [6, 11], [6, 2], [6, 15]
 \par \noindent {\bf
7:} [7, 7], [7, 12], [7, 21], [7, 22],  [7, 9], [7, 24], [7, 19], [7, 8], [7,~6], [7,
10], [7, 2]
 \par \noindent {\bf
8:} [8, 22], [8, 2], [8, 27], [8, 8], [8, 16], [8, 8], [8, 10], [8, 20], [8,~9], [8,
6], [8, 2], [8, 5], [8, 8], [8, 4], [8, 13], [8, 12], [8, 7]
 \par \noindent {\bf
9:} [9, 8], [9, 6], [9, 5], [9, 22], [9, 5], [9, 7], [9, 2], [9, 10], [9, 19], [9, 21],
[9, 23], [9, 20], [9, 22]
 \par \noindent {\bf
10:} [10, 6], [10, 5], [10, 15], [10, 24], [10, 10], [10, 22], [10,~9], [10, 12], [10,
21], [10, 7], [10, 10], [10, 10], [10, 3], [10, 2], [10,~20], [10, 8], [10, 4], [10,
26], [10, 6], [10, 12], [10, 13], [10,~16], [10, 16]
 \par \noindent {\bf 11:} [11, 6], [11, 3], [11, 2], [11, 2]
 \par \noindent {\bf 12:} [12, 10], [12, 20], [12, 3], [12, 8], [12, 10], [12, 2],  [12, 7], [12, 6], [12, 21]
 \par \noindent {\bf 13:} [13, 10], [13, 20], [13, 3], [13, 8], [13, 2]
 \par \noindent {\bf 14:} [14, 4], [14, 18], [14, 3], [14, 2]
 \par \noindent {\bf 15:} [15, 2], [15, 6], [15, 6], [15, 3], [15, 10], [15, 21]
 \par \noindent {\bf 16:} [16, 8], [16, 2], [16, 27], [16, 20], [16, 10], [16, 10],
 \par \noindent {\bf 17:} [17, 2]
 \par \noindent {\bf 18:} [18, 19], [18, 2], [18, 14]
 \par \noindent {\bf 19:} [19, 9], [19, 7], [19, 2], [19, 4], [19, 20], [19, 18], [19, 25]
 \par \noindent {\bf 20:} [20, 16],  [20, 22], [20, 9], [20, 5], [20, 19], [20, 4], [20, 20], [20,~12], [20, 3], [20, 13], [20, 10], [20, 8]
 \par \noindent {\bf 21:} [21, 7],  [21, 12], [21, 24], [21, 21], [21, 22], [21, 15], [21, 5], [21,~9], [21, 23], [21, 26], [21, 6], [21, 10]
 \par \noindent {\bf 22:} [22, 22], [22, 2], [22, 8], [22, 6], [22, 27], [22, 9], [22, 7], [22,~4], [22, 5], [22, 10], [22, 9], [22, 22], [22, 21],  [22, 20],
 \par \noindent {\bf 23:} [23, 9], [23, 21], [23, 25]
 \par \noindent {\bf 24:} [24, 6], [24, 21], [24, 7], [24, 10]
 \par \noindent {\bf 25:} [25, 23], [25, 19]
 \par \noindent {\bf 26:} [26, 10], [26, 21]
 \par \noindent {\bf 27:} [27, 8], [27, 22], [27, 16], [27, 2]}

 The numbers of vertices correspond to the numbers of
 polytopes in \cite{DER} where a complete analysis of existing data
 on perfect Delaunay polyhedra for $n\le 8$ is given.

 In
practice the algorithm often encounters perfect ellipsoids equivalent to the unit slab,
i.e., to the set $0\le  x_1 \le 1$. For $n\in\{6,7,8\}$ we know that any perfect
polyhedron adjacent to the unit slab is either a unit slab or the product of Gosset's
$6$-polytope $2_{21}$ and $\R^{n-6}$. However, for $n>6$ other unbounded perfect
polyhedra appear, such as e.g. the product of $2_{21}$ and $\R^2$ for $n=8$. We are not
able, at the present, to comprehensively  handle all subsets $S$  with $\qrank S=2$ for
such polyhedra. That is why  we cannot formally claim that our results for $n=7,8$ are
complete.

\section{Discussion}

The new method  has many advantages over the one of
\cite{DutourAdj}:
\begin{enumerate}
\item Unlike the previous methods (see e.g. \cite{DutourAdj}), the new method uses
the full symmetry group of $P$.  In particular, the  use of the full symmetry group of
$P$ allows us to use the Recursive Adjacency Decomposition Method of
\cite{bremnerpaper} to terminate computations. The termination problem is an important
one. Previous methods did not have a satisfactory solutions to the termination problem.
\item Previous methods had to select an affine basis for each perfect Delaunay
polytope. We do not know if it is possible to find an affine basis
for every perfect Delaunay polytope. The method of this paper does
not require this assumption.
\item Our method is no longer reduced to {\em basic Delaunay polytopes}.
 We know that there exist non-basic Delaunay polytopes (see \cite{grishukhin}) and
 we cannot exclude the possibility that  there exist non-basic perfect Delaunay polytopes.
 \item The new method has found all presently known $8$-dimensional perfect Delaunay polytopes. The method of \cite{DutourAdj} run in dimension $8$
 does not find some of these polytopes -- they were found as sections of higher-dimensional perfect Delaunay polytopes obtained earlier by the old
 method; however, these sections were found by a heuristic approach without any guarantee of completeness. On the other hand, if Conjecture 1 is true, then the new algorithm
 has provably found all perfect Delaunay $n$-polytopes for $n\le 8$.   We expect that running the new algorithm for $n=9,10$ will uncover previously unknown
 perfect polytopes in these dimensions.
\end{enumerate}
Our method cannot deal at present with  unbounded perfect Delaunay polyhedra. When a perfect polyhedron $P$ is unbounded, but not equivalent to the
unit slab, it is difficult to find all equivalence classes of its vertex subsets corresponding to the ridges of $\E(n)$. For this reason we cannot
guarantee that our algorithm has found all perfect ellipsoids in dimensions 7 and 8.
\bibliographystyle{latex8}

\end{document}